\tikzset{join/.code=\tikzset{after node path={%
\ifx\tikzchainprevious\pgfutil@empty\else(\tikzchainprevious)%
edge[every join]#1(\tikzchaincurrent)\fi}}}
\tikzset{>=stealth',every on chain/.append style={join},
         every join/.style={->}}
\tikzstyle{labeled}=[execute at begin node=$\scriptstyle,
\numberwithin{equation}{section}
\newtheorem{thm}{THEOREM}[section]
\newtheorem{cor}[thm]{COROLLARY}
\newtheorem{lemma}[thm]{LEMMA}
\newcommand{\G}{\Gamma}
\begin{document}
\title{Invariants of the $\mathbb Z_2$ orbifolds of the Podle\'s two spheres.}

\author{Safdar Quddus}
\address{School of Mathematical Sciences, NISER, Bhubaneswar, India}
\email{safdar@niser.ac.in}

\date{\today}

\keywords{Homology, non-commutative spheres}

\maketitle

\begin{abstract}

There are two $\mathbb Z_2$ orbifolds of the Podle\'s quantum two-sphere, one being the quantum two-disc $D_q$ and other the quantum two-dimensional real projective space $\mathbb R P^{2}_q$. In this article we calculate the Hochschild and cyclic homology and cohomology groups of these orbifolds and also the corresponding Chern-Connes indices. 
\end{abstract}

\bigskip

\section{Introduction} \label{sec-bck}
In noncommutative geometry Podle\'s noncommutative two-spheres \cite{P} are noncommutative low-dimensional manifolds, these are $SU_q(2)$-homogeneous spaces. Denoted by $S^2_{q,s}$ where 
$$ q^n \neq 1 \text{    for all } n\in \mathbb N.$$
and $s \in [0,1]$, the Podle\'s quantum spheres are parametrized by $s$. We shall study the associated C*-algebra which corresponds to the algebra of smooth functions on the quantum two spheres $S^2_{q,s}$. An extensive study of the Dirac operators, spectral triples and corresponding local index formulae for the Podle\'s spheres can be found in articles \cite{CP} \cite{DL}. In the paper \cite{GoJ} authors studied the quantum isometries of the Podle\'s spheres. In \cite{HMS} the authors studied the quantum disc $D_q$ and the two dimensional quantum real projective space $\mathbb RP^2_q$ arising by two different involutive automorphisms of $S^2_{q,1}$(for $s>0, C(S^2_{q,s}) \cong C(S^2_{q,1}$)\cite{Sh}). All automorphisms of $S^2_{q,s}$ are known to be diagonal as described in \cite{K}. Let $\mathcal A$ denote the C*-algebra $C(S^2_{q,s})$ and $\rho \in Aut(\mathcal A)$. Hochschild and cyclic homology of the Podle\'s quantum spheres was calculated by Masuda, Nakagami and Watanabe [MNW], using a free resolution . In \cite{H} author used this resolution to calculate the Hochschild homologies $H_n(\mathcal A, {}_{\rho}\mathcal A)$, which by \cite{HK} are isomorphic to the twisted Hochschild homologies $HH^{\rho}_n(\mathcal 
A)$. In the untwisted situation [MNW] the Hochschild groups $H_n(\mathcal A, \mathcal A)$ vanish for $n \geq 2$, in contrast to the classical situation $q = 1$. However, in the twisted situation, there exists an automorphisms $\rho$ with $H_n(\mathcal A, {}_{\rho}\mathcal A)\neq 0$ for $n = 0, 1, 2$. These automorphisms which are the positive powers of the canonical modular automorphism associated to the $SU_q(2)$-invariant linear functional is not an order two automorphism and hence will not be discussed here, but it is interesting to note this phenomenon studied in detail in \cite{H}. \par

To our knowledge, the Hochschild and periodic cyclic (co)homology of these two $\mathbb Z_2$ orbifolds of the Podle\'s two spheres are not known in the literature. In this article we compute the Hochschild and cyclic homology and cohomology groups of the two $S^2_{q,1}$ $\mathbb Z_2$-orbifolds $D_q$ and $\mathbb RP^2_q$. We also compute the Chern-Connes indices for each of these orbifolds by pairing the even periodic cocycles with the projections. While the projection for the quantum two dimensional real projective space was calculated in \cite{HMS}, for the quantum disc case the group $K_0(C(D_q)) \cong \mathbb Z$ as $C(D_q)$ is a Toeplitz algebra \cite[pp 191]{We}.

\section{$\mathbb Z_2$ actions on the Podle\'s sphere.}
The Podle\'s spheres $S^2_{q,s}$ has its C*-algebra $\mathcal A$, closure of the *-algebra generated by $A$ and $B$ satisfying the following relations:
$$A=A^\ast, \text{ } BA=q^2AB, \text{ }B^\ast B+A^2=(1-s^2)A + s^2, \text{ }BB^\ast + q^4A^2=(1-s^2)q^2A+s^2.$$
The automorphism of $\mathcal A$ acts diagonally on the generators, explicity for $\rho \in Aut(\mathcal A)$ and $\lambda \in \mathbb C$, $\rho$ has one of the following actions on the generators:

$$\sigma_\lambda(B)=\lambda B, \text{ } \sigma_\lambda(A)=A, \text{ } \sigma_\lambda(B^\ast) = \lambda^{-1} B^\ast.$$ 
$$\mu_\lambda(B)=\lambda B, \text{ } \mu_\lambda(A)=-A, \text{ } \mu_\lambda(B^\ast) = \lambda^{-1} B^\ast.$$ 
For $\rho = \sigma_{-1}$, the algebra $\mathcal A \rtimes_{\sigma_{-1}} \mathbb Z_2$ is associated to the quantum disc $D_q$, while for $\rho = \mu_{-1}$; the algebra $\mathcal A \rtimes_{\mu_{-1}} \mathbb Z_2$ corresponds to the quantum real projective space $\mathbb R P^2_q$\cite{HMS}.

\section{Strategy of the proof}
We use the paracyclic decomposition technique to decompose the the homology groups\cite{GJ}. 
One can use the results of \cite{GJ} to deduce the following decomposition of the homology group of the algebra $\mathcal A \rtimes \G$.
\begin{thm}\cite{GJ}
If $\G$ is finite and $|\G|$ is invertible in k, then there is a natural isomorphism of cyclic homology and
\begin{center}
$HH_\bullet(\mathcal A \rtimes \G)= HH_\bullet(H_0(\G, (\mathcal A)_\G^\sharp),$
\end{center}
where $(H_0(\G, (\mathcal A)_\G^\sharp)$ is the cyclic module
\begin{center}
$H_0(\G,(\mathcal A)_\G^\sharp)(n) =  H_0(\G,k[\G] \otimes (\mathcal A)^{\otimes {n+1}})$.
\end{center}
\end{thm}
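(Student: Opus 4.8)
The plan is to realize the cyclic module of the crossed product $\mathcal A\rtimes\G$ as the $\G$-coinvariants of a paracyclic module assembled out of $\mathcal A$ alone, and then to use the invertibility of $|\G|$ to see that these strict coinvariants already compute the homology, with no correction coming from higher group homology. Since in our applications $\G=\mZ_2$ and $k=\mC$, the hypothesis $|\G|$ invertible holds automatically.

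First I would use the vector-space identification $\mathcal A\rtimes\G\cong\mathcal A\otimes k[\G]$ to expand the standard cyclic bar module. Writing a chain as $(a_0\otimes g_0)\otimes\cdots\otimes(a_n\otimes g_n)$ and using the crossed-product multiplication $(a\otimes g)(b\otimes h)=a\,g(b)\otimes gh$, one computes the faces, degeneracies, and cyclic operator explicitly. The point to extract is that the total group element $g_0g_1\cdots g_n$ is preserved by the inner faces and degeneracies and is conjugated (by $g_n$) under the wrap-around face $d_n$ and the cyclic operator $t$. Consequently the complex splits as a direct sum indexed by the value $g=g_0\cdots g_n\in\G$, and the $\G$-action shuffles these summands by conjugation. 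I would reorganize this into the paracyclic module $(\mathcal A)_\G^\sharp$ with
\begin{equation}
(\mathcal A)_\G^\sharp(n)=k[\G]\otimes\mathcal A^{\otimes(n+1)},
\end{equation}
where the factor $k[\G]$ records the holonomy $g$, the Hochschild faces are the usual ones, and the cyclic operator carries the twist by $g$ dictated by the computation above, while $\G$ acts diagonally, by conjugation on $k[\G]$ and through $\sigma_{-1}$ (resp.\ $\mu_{-1}$) on $\mathcal A^{\otimes(n+1)}$. The content of this step is a chain-level isomorphism identifying $C_\bullet(\mathcal A\rtimes\G)$ with the strict $\G$-coinvariants of $(\mathcal A)_\G^\sharp$.

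Next I would check that these coinvariants form a genuine cyclic module. The object $(\mathcal A)_\G^\sharp$ is only paracyclic: its defect is $t^{n+1}=(\text{action of the holonomy }g)$ rather than the identity. The key observation is that on the $\G$-coinvariants this defect disappears, because the coinvariants localize at conjugacy classes and the $g$-labelled part survives as the coinvariants for the centralizer $Z(g)$, which contains $g$ itself; since we quotient by an action that includes $g$, the induced operator $t^{n+1}$ becomes the identity. Hence $H_0(\G,(\mathcal A)_\G^\sharp)$ is a bona fide cyclic module and the identification of the first step upgrades to an isomorphism of cyclic modules. To pass from this isomorphism to the homology statement I would invoke $|\G|$ invertible through the group-homology spectral sequence $H_p(\G,HH_q((\mathcal A)_\G^\sharp))\Rightarrow HH_{p+q}(\mathcal A\rtimes\G)$: by Maschke's averaging idempotent $\tfrac{1}{|\G|}\sum_{h\in\G}h$ the higher rows $H_p(\G,-)$ with $p>0$ vanish, so the spectral sequence collapses onto the $p=0$ line, leaving exactly $H_0(\G,-)$ and no positive-degree contribution. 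Assembling the three steps yields the asserted natural isomorphism, and the identical argument applied to Connes' cyclic bicomplex gives the companion statement for $HC_\bullet$.

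I expect the main obstacle to lie in the first two steps rather than the last. Verifying that the twisted faces and cyclic operator genuinely assemble into a well-defined paracyclic module, with the signs and the placement of the $g$-twist correct, is delicate bookkeeping; and proving rigorously that the paracyclicity defect $t^{n+1}$ collapses to the identity on the coinvariants is the conceptual crux. By contrast, the spectral-sequence collapse is clean once $|\G|$ is invertible, and I would be careful to isolate precisely where the averaging idempotent is used, since it is exactly the hypothesis without which the theorem fails.
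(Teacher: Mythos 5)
The paper itself offers no proof of this statement---it is quoted verbatim from \cite{GJ}---so your proposal has to be measured against the Getzler--Jones argument it is implicitly reconstructing, and there it contains a genuine flaw. Your Step 1 claims a chain-level isomorphism between $C_\bullet(\mathcal A\rtimes\Gamma)$ and the strict coinvariants $H_0(\Gamma,(\mathcal A)^\sharp_\Gamma)$. That cannot be right: in degree $n$ the left-hand side is $(\mathcal A\otimes k[\Gamma])^{\otimes(n+1)}\cong\bigoplus_{(g_0,\dots,g_n)\in\Gamma^{n+1}}\mathcal A^{\otimes(n+1)}$, a direct sum of $|\Gamma|^{n+1}$ copies of $\mathcal A^{\otimes(n+1)}$, whereas the right-hand side is a quotient of $k[\Gamma]\otimes\mathcal A^{\otimes(n+1)}$, i.e.\ of only $|\Gamma|$ copies. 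Already for $\Gamma=\mathbb Z_2$ and $n=1$ the components with $(g_0,g_1)=(1,1)$ and $(g_0,g_1)=(\gamma,\gamma)$ both have holonomy $1$ and both project onto the same summand of the coinvariants, so the natural map has an enormous kernel. What is actually true, and what \cite{GJ} prove, is that $C_\bullet(\mathcal A\rtimes\Gamma)$ is the \emph{homotopy} quotient of the paracyclic module $(\mathcal A)^\sharp_\Gamma$: the leftover group coordinates assemble into the bar resolution of $\Gamma$, and only the induced projection onto the strict coinvariants is a quasi-isomorphism, precisely when $|\Gamma|$ is invertible. This mis-step makes your argument internally inconsistent: if Step 1 held literally, the theorem would follow for every finite group in every characteristic (take homology of isomorphic complexes), which is false---e.g.\ for $\mathcal A=k$, $\Gamma=\mathbb Z/p$ in characteristic $p$ the crossed product is $k[y]/(y^p)$, with nonvanishing Hochschild homology in all degrees, while the strict coinvariants model is concentrated in degree $0$. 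Moreover the spectral sequence $H_p(\Gamma,HH_q((\mathcal A)^\sharp_\Gamma))\Rightarrow HH_{p+q}(\mathcal A\rtimes\Gamma)$ you invoke in Step 3 exists only because of the homotopy-quotient (bar-resolution) description you discarded; if the chain complex were literally the strict coinvariants there would be nothing for it to arise from.

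Two further points. First, even granting the spectral sequence, its collapse yields $HH_n(\mathcal A\rtimes\Gamma)\cong H_0(\Gamma,HH_n((\mathcal A)^\sharp_\Gamma))$, whereas the theorem asserts $HH_n(H_0(\Gamma,(\mathcal A)^\sharp_\Gamma))$; interchanging $H_0(\Gamma,-)$ with passage to homology requires exactness of coinvariants, i.e.\ Maschke once more, and this is exactly the content of the lemma from \cite{Q} that the paper records immediately after the theorem---your write-up silently elides it. Second, on the positive side, your Step 2 is correct and is the right conceptual point: the paracyclicity defect $t^{n+1}$ is the action of the holonomy $g$, and since the coinvariants identify $(g,\vec a)$ with $(g,g\cdot\vec a)$, this operator becomes the identity there, so $H_0(\Gamma,(\mathcal A)^\sharp_\Gamma)$ is an honest cyclic module. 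The skeleton of your argument (paracyclic module, coinvariants, Maschke) is therefore the right one; the repair needed is to replace the false chain-level isomorphism of Step 1 by the identification of $C_\bullet(\mathcal A\rtimes\Gamma)$ with the bar-construction homotopy quotient, from which your Step 3 spectral sequence then legitimately arises.
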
 
Since $\G$ is abelian, we can conclude that the group homology $H_0(\G, \mathcal {A} ^{ \natural}_{\G})$ splits the complex into $|\G|$ disjoint parts.
\begin{center} 
$H_0(\G,\mathcal {A} ^{ \natural}_{\G})(n) = H_0(\G, k[\G] \otimes (\mathcal {A}_{} )^{\otimes {n+1}}) = \displaystyle \bigoplus_{t \in \G} (({}_{t}\mathcal {A} )^{\otimes {n+1}})^{\G}$\\
\end{center}
For each $t \in \G$ ,  the algebra ${}_{t}\mathcal {A} $ is set-wise $\mathcal A$ with the twisted Hochschild differential ${}_{t}b$ acting as 
$${}_{t}b(a_0 \otimes a_1 \otimes \cdots \otimes a_n) = b'(a_0 \otimes a_1 \otimes \cdots \otimes a_n) + (-)^n((t \cdot a_n)a_)\otimes a_1 \otimes \cdots \otimes a_{n-1})$$
on the complex ${}_{t}\mathcal A^{\otimes (\bullet+1)}$.
We therefore decompose Hochschild homology $HH_\bullet(\mathcal A \rtimes \G)$ as follows:
\begin{center}
$HH_\bullet(\mathcal A \rtimes \G)= HH_\bullet(H_0(\G,\mathcal {A} ^{ \natural}_{\G}))=\displaystyle \bigoplus_{t \in \G} HH_\bullet(({}_{t}\mathcal {A}^{ \bullet})^{\G})$.
\end{center} 
It is enough to calculate $HH_\bullet(({}_{t}\mathcal {A} ^{ \bullet})^{\G})$ for each $t \in \G$. To calculate these individual homology groups, we use the lemma below.
\begin{lemma}\cite{Q}
Let 
\begin{center}
$J_\ast :=  0 \xleftarrow{d} A \xleftarrow{d} (\mathcal A^{\otimes2}) \xleftarrow{d} (\mathcal A^{\otimes3}) \xleftarrow{d} (\mathcal A^{\otimes4}) \xleftarrow{d} (\mathcal A^{\otimes5}) \xleftarrow{d} ...$
\end{center}
be a chain complex. For a given $\G$ action on $\mathcal A$, consider the following chain complex, with chain map $d^\G :(\mathcal A^{\otimes n})^\G \rightarrow (\mathcal A^{\otimes{n-1}})^\G$ induced from the map $d : \mathcal A^{\otimes n} \rightarrow \mathcal A^{\otimes{n-1}}$.
\begin{center}
$J^\G_{\ast} := 0 \xleftarrow{d^\G} \mathcal A^\G \xleftarrow{d^\G} (\mathcal A^{\otimes2})^\G \xleftarrow{d^\G} (\mathcal A^{\otimes3})^\G \xleftarrow{d^\G} (\mathcal A^{\otimes4})^\G \xleftarrow{d^\G} (\mathcal A^{\otimes5})^\G \xleftarrow{d^\G} ...$
\end{center}
With the $\G$ action commuting with the differential $d$. We have the following group equality $H_\bullet(J_\ast^\G, d^\G)=H_\bullet(J_\ast, d)^\G$.
\end{lemma}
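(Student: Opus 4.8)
The plan is to exploit the averaging idempotent attached to the finite group $\G$. Since we work over a ground ring $k$ in which $|\G|$ is invertible (as in the decomposition theorem quoted above; here $\G = \mathbb Z_2$, so $1/2 \in k$), I would first introduce the Reynolds operator
\[
e \;=\; \frac{1}{|\G|}\sum_{g \in \G} g ,
\]
acting diagonally on each term $\mathcal A^{\otimes n}$ of the complex $J_\ast$. The elementary facts to verify are that $e$ is idempotent, that its image is precisely the invariant subspace $(\mathcal A^{\otimes n})^\G$, and --- crucially --- that $e$ commutes with the differential $d$. This last point is exactly where the hypothesis that the $\G$-action commutes with $d$ is used: it promotes $e$ to a chain endomorphism of $J_\ast$.

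Next I would use $e$ to split the complex. Because $e$ is an idempotent chain map, so is $1-e$, and together they yield a direct-sum decomposition of complexes
\[
J_\ast \;=\; eJ_\ast \,\oplus\, (1-e)J_\ast ,
\]
in which $eJ_\ast = J_\ast^\G$ is exactly the invariant subcomplex carrying the induced differential $d^\G$. Since homology is an additive functor it commutes with this finite direct sum, giving
\[
H_\bullet(J_\ast,d) \;=\; H_\bullet(J_\ast^\G, d^\G)\,\oplus\, H_\bullet\big((1-e)J_\ast\big).
\]

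Finally I would identify the invariant part of the homology. The chain map $e$ descends to an idempotent $e_\ast$ on $H_\bullet(J_\ast,d)$, which is again the averaging operator for the induced $\G$-action on homology, so $\mathrm{im}(e_\ast) = H_\bullet(J_\ast,d)^\G$. Under the decomposition above $e_\ast$ is the identity on the first summand (elements of $eJ_\ast$ are fixed by $e$) and zero on the second (since $e(1-e)=0$), whence
\[
H_\bullet(J_\ast,d)^\G \;=\; \mathrm{im}(e_\ast) \;=\; H_\bullet(J_\ast^\G, d^\G),
\]
which is the asserted equality. The only genuinely delicate point is the invertibility of $|\G|$ in $k$: it is what guarantees that $e$ exists as an idempotent and, equivalently, that taking $\G$-invariants is an exact functor that commutes with passage to homology. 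Without it the natural map $H_\bullet(J_\ast^\G)\to H_\bullet(J_\ast)^\G$ need not be an isomorphism; everything else is formal diagram-checking.
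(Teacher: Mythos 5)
Your proof is correct and is essentially the standard argument behind the cited lemma: since the paper itself only quotes this result from \cite{Q} (where it rests on the same principle that, with $|\G|$ invertible in $k$, the averaging idempotent splits off the invariant subcomplex and taking invariants becomes exact), your Reynolds-operator argument reproduces exactly that reasoning. The splitting $J_\ast = eJ_\ast \oplus (1-e)J_\ast$, additivity of homology, and the identification of $\mathrm{im}(e_\ast)$ with $H_\bullet(J_\ast,d)^\G$ are all handled correctly, including the crucial caveat that invertibility of $|\G|$ is what makes the comparison map an isomorphism.
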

Hence using the above lemma we have the following is the decomposition of the Hochschild homology group $H_\bullet(\mathcal A \rtimes_{\rho} \mathbb Z_2)$:
$$H_\bullet(\mathcal A \rtimes_{\rho} \mathbb Z_2) =H_\bullet(\mathcal A,\mathcal A)^{\rho} \oplus H_\bullet(\mathcal A, {}_{\rho}\mathcal A)^{\rho}. $$
Where ${}_{\rho}\mathcal A$, set-wise $\mathcal A$, is an $\mathcal A^e(=\mathcal A \otimes \mathcal A^{op})$ bi-module with the following actions:
$$\alpha \cdot a = (-1 \cdot \alpha)a\text{ and } a \cdot \alpha = a\alpha, \text{ for } \alpha \in \mathcal A \text{ and } a \in {}_{\rho}\mathcal A$$

The Hochschild and cyclic homology groups of $H_\bullet(\mathcal A, {}_{\rho}\mathcal A)$ and $HC_\bullet(\mathcal A, {}_{\rho}\mathcal A)$ equal the twisted Hochschild and cyclic homology groups $HH^{\rho}_{\bullet}(\mathcal A)$ and $HC^{\rho}_{\bullet}(\mathcal A)$ as $\rho$ is diagonal \cite{K}. Hence in order to understand $H_{\bullet}(\mathcal A \rtimes_{\rho} \mathbb Z_2)$ we need to understand the $\rho$ invariant subgroups of $H_\bullet(\mathcal A, \mathcal A)$ and $H_\bullet(\mathcal A, {}_{\rho}\mathcal A)$. For this we use the MNW resolution which we describe below.  \par

In the article \cite{MNW}, the authors presented a resolution of $\mathcal A$,
$$ \cdots \rightarrow \mathcal M_{n+1} \rightarrow \mathcal M_n \cdots \rightarrow \mathcal M_2 \rightarrow \mathcal M_1 \rightarrow \mathcal M_0 \rightarrow \mathcal A \rightarrow 0 $$ 
by free left $\mathcal A^e$-modules $\mathcal M_n$, with $rank(\mathcal M_0)=1$, $rank(\mathcal M_1)=3$ and $rank(\mathcal M_n)=4$ for $n\geq 2$. Adapting their notation, $\mathcal M_1$ has basis $\{e_A, e_B, e_{B^{\ast}}\}$, with $d_1: \mathcal M_1 \rightarrow \mathcal M_0 = \mathcal A^e$ given by
$$d_1(e_t) = t \otimes 1 - 1 \otimes t^o, \text{  } t = A,B,B^{\ast}.$$
The module $\mathcal M_2$ has basis $\{e_a \wedge e_B, e_A \wedge e_B^{\ast}, \vartheta^{(1)}_S, \vartheta^{(1)}_T \}$, with $d_2: \mathcal M_2 \rightarrow \mathcal M_1$ given by 
$$d_2(1_{\mathcal A^{e}} \otimes (e_A \wedge e_{B^{\ast}})) = (A\otimes 1 - 1 \otimes q^2A^{o})\otimes e_{B^{\ast}}-(q^2B^{\ast} \otimes 1 -1\otimes B^{\ast o})\otimes e_A,$$
$$d_2(1_{\mathcal A^{e}} \otimes (e_A \wedge e_{B^{\ast}})) = (q^2A\otimes 1 - 1 \otimes A^{o})\otimes e_{B^{}}-(B^{} \otimes 1 -1\otimes q^2B^{ o})\otimes e_A,$$
$$d_2(1_{\mathcal A^{e}} \otimes \vartheta^{(1)}_S) = -q^{-1}(B\otimes 1 \otimes e_{B^{\ast}}+1 \otimes B^{\ast o } \otimes e_B) - q(q^2(A\otimes  1 + 1 \otimes A^{o})\otimes e_A,$$
$$d_2(1_{\mathcal A^{e}} \otimes \vartheta^{(1)}_T) = -q^{-1}(1 \otimes B^o \otimes  e_{B^{\ast}} +B^{\ast} \otimes 1 \otimes e_B) - q^{-1}((A\otimes  1 + 1 \otimes A^{o})\otimes e_A.$$
The maps $d_i$; $i \geq 3$ are not needed in this article, for reference purpose we refer to \cite{MNW}. \par

\begin{center}
\begin{tikzcd}
    \arrow{r} & \mathcal M_2\arrow{r}{d_2}\arrow[harpoon]{d}{f_2} & \mathcal{M}_1\arrow{r}{d_1}\arrow[harpoon]{d}{f_1}  & \mathcal M_0\arrow{r}{d_0}\arrow[harpoon]{d}{f_0} & \mathcal A\arrow{r}\arrow{d}{\cong} & 0 \\
    \arrow{r} & \mathcal A^{\otimes 4}\arrow{r}{b'}\arrow[harpoon]{u}{h_2}\arrow{r} & \mathcal A^{\otimes 3}\arrow[harpoon]{u}{h_1}\arrow{r}{b'} & \mathcal A^{\otimes 2}\arrow{r}{b'}\arrow[harpoon]{u}{h_0}\arrow{r} & \mathcal A\arrow{r} & 0
\end{tikzcd}
\end{center}

\
There exists chain homotopy equivalence between the MNW and the bar resolution of $\mathcal A$. This is a standard construction in homological algebra. Let $(\mathcal M, d)$ denote the MNW resolution of $\mathcal A$ and let $(\mathcal N, b')$ be the bar resolution. Then we have $\{\mathcal M\}_i = \mathcal M_i$ and $\{\mathcal N\}_i = \mathcal A^{\otimes {(i+2)}}$. And the maps $\{f\}_{i \geq 0} : \mathcal M_i \rightarrow \mathcal A^{\otimes(i+2)}$ lift the identity map on $\mathcal A$ onto the complex between the resolutions. Simialrly let $\{h\}_{i \geq 0} :  \mathcal A^{\otimes(i+2)} \rightarrow \mathcal M_i$ be the chain homotopy equivalence map between the resolutions. Explicitly these maps are as follows:
\begin{center}
$ f_0(a_1 \otimes a_2^o) = (a_1,a_2)$,   $f_1(e_t) = (1,t,1)$; for $t=A,B,B^{\ast}$.
\end{center}
The maps $f_i$, $i \geq2$ can be found inductively satisfying the relation $f_i \cdot d_{i+1} = b' \cdot f_{i+1}$.

Since  a Poincar\'e-Birkhoff-Witt(PBW) basis for $\mathcal A$ consists of the monomials
$$\{A^kB^j\}_{j,k\geq0}, \{A^kB^{\ast (j+1)}\}_{j,k\geq 0}$$
 we define $h_1$ on the above PBW basis elements. 
For $a,b \in \mathcal A$, $h_0(a , b) =a \otimes b^o$ and for $t = B, B^\ast$
\begin{center}
$h_1(a,A^nt^m, b)= (a \otimes b^o)\{(t^m)^o(A^{n-1})^oe_A + (t^m)^o(A^{n-2})^oAe_A+...+ (t^m)^oA^{n-1}e_A+A^n(t^{m-1})^oe_t+A^n(t^{m-2})^ote_t+...+A^nt^{m-1}e_t\}$.
\end{center}
In order to locate the $\mathbb Z_2$ invariant cocycles of $H^\bullet(\mathcal A, {}_{\rho}\mathcal A)$, we need to use the resolution homotopy maps
$$h_{\ast}: \mathcal N_\ast \to \mathcal M_\ast$$ and $$f_{\ast}:\mathcal M_\ast \to \mathcal N_\ast.$$
We push a cocycles $\mathcal D$ into the bar complex and let $\mathbb Z_2$ act on it. Then, in the MNW complex, we compare the pullback of this $\mathbb Z_2$-acted cocycle with $\mathcal D$ to check the $\mathbb Z_2$ invariance.

\section{The quantum disc}
For $\rho =\sigma_{-1}$, the algebra $\mathcal A \rtimes_{\sigma{-1}} \mathbb Z_2$ corresponds to the quantum disc $D_q$. 
\begin{thm}[Hochschild and Cyclic Homology] The Hochschild and cyclic homology groups of $D_q$ are as follows:
\begin{center}
$H_\bullet(C(D_q),C(D_q) ) \cong\begin{cases}
\mathbb C^{\mathbb N} & \text{ for } \bullet  = 0,1\\
0 & \text{ for } \bullet >1. \end{cases}$\\
\end{center}
\begin{center}
$HP_\bullet(C(D_q)) \cong\begin{cases}
\mathbb C^4 & \text{ for } \bullet  = 2n\\
0 & \text{ for } \bullet = 2n+1. \end{cases}$\\
\end{center}
\end{thm}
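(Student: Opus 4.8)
The plan is to specialize the paracyclic decomposition recalled above to $\G = \mathbb Z_2 = \{1,\sigma_{-1}\}$ and to compute the two resulting sectors from the MNW resolution. The two group elements contribute the untwisted and the $\sigma_{-1}$-twisted pieces, so
$$H_\bullet(C(D_q)) = H_\bullet(\mathcal A,\mathcal A)^{\sigma_{-1}} \oplus H_\bullet(\mathcal A,{}_{\sigma_{-1}}\mathcal A)^{\sigma_{-1}},$$
and it suffices to describe each sector together with the induced $\sigma_{-1}$-action. First I would apply $\mathcal A\otimes_{\mathcal A^e}(-)$ to the free resolution $(\mathcal M_\bullet,d)$ to obtain a complex computing $H_\bullet(\mathcal A,\mathcal A)$; by \cite{MNW} its homology is concentrated in degrees $0$ and $1$, so the untwisted sector vanishes for $\bullet\geq 2$. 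In degree $0$ this is $\mathcal A/[\mathcal A,\mathcal A]$ and in degree $1$ it is $\ker$ of the induced $d_1$ modulo the image of the induced $d_2$; using the PBW basis $\{A^kB^j\}$, $\{A^kB^{\ast(j+1)}\}$ one records explicit countable spanning sets for both.

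The crux is to isolate the $\sigma_{-1}$-invariants. Since $\sigma_{-1}$ fixes $A$ and negates $B$ and $B^{\ast}$, its induced action multiplies each PBW monomial by $(-1)$ raised to its total $B/B^{\ast}$-degree, so the invariant classes are exactly the even-degree ones; this still leaves a countably infinite family in each of degrees $0$ and $1$, yielding the asserted $\mathbb C^{\mathbb N}$. For the twisted sector I would rerun the computation with the twisted differential ${}_{\sigma_{-1}}b$ recalled above (equivalently, tensoring the resolution against the bimodule ${}_{\sigma_{-1}}\mathcal A$) and again pass to invariants. Here I expect the vanishing range $\bullet\geq 2$ to persist: as $\sigma_{-1}$ is an involution and not a positive power of the modular automorphism, the extra top-degree classes found in \cite{H} do not appear, so $H_2(\mathcal A,{}_{\sigma_{-1}}\mathcal A)^{\sigma_{-1}}=0$. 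Adding the invariant twisted $H_0$ and $H_1$ to the untwisted contribution gives $\mathbb C^{\mathbb N}$ in degrees $0,1$ and $0$ above, which is the first display.

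For the periodic theory I would use that the decomposition above is one of cyclic modules and hence passes to periodic cyclic homology, giving
$$HP_\bullet(C(D_q)) = HP_\bullet(\mathcal A)^{\sigma_{-1}} \oplus HP^{\sigma_{-1}}_\bullet(\mathcal A)^{\sigma_{-1}},$$
and then feed each sector's Hochschild groups into Connes' $SBI$ exact sequence. Because every sector is concentrated in Hochschild degrees $0$ and $1$, the operators $B$ and $S$ force the periodic inverse limit to stabilize, so that only finitely many classes survive; a direct check on representative cocycles should show that all odd classes die and that the $\sigma_{-1}$-invariant even classes of the two sectors together form $\mathbb C^4$, establishing the second display. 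The step I expect to be the main obstacle is transporting the $\sigma_{-1}$-action faithfully across the chain-homotopy equivalence $f_\bullet,h_\bullet$ between the MNW and bar resolutions: to decide invariance one pushes a class into the bar complex, acts by $\sigma_{-1}$, pulls back by $h_\bullet$, and compares with the original, and carrying this out in degree $1$ --- where $h_1$ is the intricate telescoping sum recorded above --- is where the genuine care lies. A secondary verification is the exact vanishing of the twisted groups for $\bullet\geq 2$, which needs enough of the higher differentials $d_i$, $i\geq 3$, from \cite{MNW}.
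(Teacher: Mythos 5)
Your proposal is correct and follows essentially the same route as the paper: the same paracyclic splitting into the untwisted and $\sigma_{-1}$-twisted sectors, sector homologies read off the MNW resolution (the paper simply cites Hadfield \cite{H} for these, including the vanishing in degrees $>1$ for both $\rho=id$ and $\rho=\sigma_{-1}$), invariance decided by pushing classes to the bar complex and pulling back via $h_\bullet$, and stabilization of the $SBI$/cyclic data to get $\mathbb C^4$ in the even periodic theory. The one point to watch is the step you yourself flagged: in degree $1$ the induced action also negates the basis vectors $e_B$ and $e_{B^{\ast}}$, so the invariant classes are $B^{2j+1}\otimes e_B$ and $B^{\ast\, 2j+1}\otimes e_{B^{\ast}}$ (odd monomial powers), not the even ones your ``even total degree'' heuristic might naively suggest --- this changes the generators but not the abstract answer $\mathbb C^{\mathbb N}$.
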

\begin{proof}
We have the following :
$$H_\bullet(C(D_q), C(D_q)) =H_\bullet(\mathcal A,\mathcal A)^{\sigma_{-1}} \oplus H_\bullet(\mathcal A, {}_{\sigma_{-1}}\mathcal A)^{\sigma_{-1}}. $$
From \cite{H} we know that $H_0(\mathcal A,\mathcal A) = \mathbb C[1] \oplus \mathbb C[A] \oplus \Sigma_{m \geq 1}^{\oplus}\mathbb C[B^m]\oplus \Sigma_{m \geq 1}^{\oplus}\mathbb C[B^{\ast m}]$. We push each of these cyclces into the bar complex using the map $f_0$ and let $\sigma_{-1}$ act on them, thereafter we pull it back to the MNW complex using the map $h_0$. We see that:
$$H_0(\mathcal A,\mathcal A)^{\sigma_{-1}} = \mathbb C[1] \oplus \mathbb C[A] \oplus \Sigma_{m \geq 1}^{\oplus}\mathbb C[B^{2m}]\oplus \Sigma_{m \geq 1}^{\oplus}\mathbb C[B^{\ast 2m}].$$
Similarly we also get that
$$H_0(\mathcal A,{}_{\sigma_{-1}}\mathcal A)^{\sigma_{-1}} = \mathbb C[1] \oplus \mathbb C[A] .$$
Hence we have $H_0(C(D_q),C(D_q)) \cong \mathbb C^{\mathbb N}$. Similarly we compute that $H_1(C(D_q),C(D_q) )$,  we use maps $h_1$ and $f_1$ to locate the invariant cyclic cocycles in the groups $H_\bullet(\mathcal A,\mathcal A)^{\sigma_{-1}}$ and $H_\bullet(\mathcal A, {}_{\sigma_{-1}}\mathcal A)^{\sigma_{-1}}$. The group $H_1(C(D_q),C(D_q) )$ is generated by the elements $1 \otimes {}_{\sigma_{-1}}{e_A}$, $1 \otimes e_A$, $\{B^{2j+1} \otimes e_B\}_{j\geq0}$ and $\{B^{\ast {2j+1}} \otimes e_{B^{\ast}}\}_{j\geq0}$. Here ${}_{\sigma_{-1}}{e_A}$ denotes the invariant copy of the cycle $e_A \in H_0(\mathcal A,{}_{\sigma_{-1}}\mathcal A)$. Since for $\rho= id \text{ and } \sigma_{-1}$ and  $\bullet >1$, $H_\bullet(\mathcal A, {}_{\rho}\mathcal A)  = 0$. We have
$$H_\bullet(C(D_q),C(D_q) ) = 0 \text{ for } \bullet>1.$$
In a similar way we compute $HC_\bullet(C(D_q),C(D_q) )$. We observe that the cyclic homology group  $HC^{\rho}_{2n}(\mathcal A) = \mathbb C[1] \oplus \mathbb C[A]$ and $HC^{\rho}_{2n+1}(\mathcal A) =0$ for $\rho \in \{\sigma_{-1} , id\}$ \cite{H}. Using the paracyclic decomposition for the cyclic homology $HC_{\bullet}(C(D_q))$ we have:
$$HC_\bullet(C(D_q)) =HC_\bullet(\mathcal A,\mathcal A)^{\sigma_{-1}} \oplus HC_\bullet(\mathcal A, {}_{\sigma_{-1}}\mathcal A)^{\sigma_{-1}}. $$
We now check that each of the four cycles of $HC_{2n}(\mathcal A,\mathcal A) \oplus HC_{2n}(\mathcal A, {}_{\sigma_{-1}}\mathcal A)$ are $\sigma_{-1}$ invariant.
\end{proof}
\begin{cor}[Hochschild and Cyclic Cohomology] The Hochschild and cyclic homology groups of $D_q$ are as follows:
\begin{center}
$H^\bullet(C(D_q),C(D_q )^{'}) \cong\begin{cases}
\mathbb C^{\mathbb N} & \text{ for } \bullet  = 0,1\\
0 & \text{ for } \bullet >1. \end{cases}$\\
\end{center}
\begin{center}
$HP^\bullet(C(D_q)) \cong\begin{cases}
\mathbb C^4 & \text{ for } \bullet  = 2n\\
0 & \text{ for } \bullet = 2n+1. \end{cases}$\\
\end{center}
\end{cor}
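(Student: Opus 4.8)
The plan is to deduce the corollary from the Theorem by a duality (universal-coefficient) argument, rather than by repeating the invariant-subspace computation at the level of cochains. The key observation is that the Hochschild cochain complex of $\mathcal{A} = C(D_q)$ with coefficients in the dual bimodule $\mathcal{A}'$ is, degreewise, the linear dual of the Hochschild chain complex with coefficients in $\mathcal{A}$. Indeed, via the adjunction $\mathrm{Hom}(\mathcal{A}^{\otimes n}, \mathrm{Hom}(\mathcal{A},\mathbb{C})) \cong \mathrm{Hom}(\mathcal{A}^{\otimes(n+1)},\mathbb{C})$ one identifies the cochain space $C^n(\mathcal{A},\mathcal{A}')$ with $(C_n(\mathcal{A},\mathcal{A}))'$, and under this identification the Hochschild coboundary is exactly the transpose of the Hochschild boundary. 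Since $\mathbb{C}$ is a field, the functor $\mathrm{Hom}(-,\mathbb{C})$ is exact, so cohomology of the dualized complex is the dual of homology:
$$H^n(C(D_q),C(D_q)') \cong \big(H_n(C(D_q),C(D_q))\big)'.$$

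First I would apply this isomorphism together with the homology already computed in the Theorem. For $\bullet > 1$ the homology vanishes, hence so does its dual, giving $H^\bullet(C(D_q),C(D_q)') = 0$ for $\bullet > 1$ immediately. For $\bullet = 0,1$ the Theorem exhibits explicit generators (the classes $[1],[A],[B^{2m}],[B^{\ast 2m}]$ in degree zero and the cycles $1\otimes e_A$, $B^{2j+1}\otimes e_B$, $B^{\ast(2j+1)}\otimes e_{B^{\ast}}$ in degree one), and the dual answer is read off by pairing against the corresponding family of functionals, yielding $\mathbb{C}^{\mathbb{N}}$ in each of the degrees $0$ and $1$.

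For the periodic cyclic cohomology I would invoke the analogous duality $HP^\bullet(C(D_q)) \cong \big(HP_\bullet(C(D_q))\big)'$, which follows by dualizing the $(b,B)$-bicomplex and using exactness of $\mathrm{Hom}(-,\mathbb{C})$ once more. The Theorem gives $HP_\bullet(C(D_q)) \cong \mathbb{C}^4$ for $\bullet$ even and $0$ for $\bullet$ odd; since these spaces are finite-dimensional, dualization is canonical and returns $\mathbb{C}^4$ in even degree and $0$ in odd degree, exactly as claimed.

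The step requiring the most care is the infinite-dimensional Hochschild case: the purely algebraic dual of a countable direct sum $\bigoplus_{\mathbb{N}}\mathbb{C}$ is the product $\prod_{\mathbb{N}}\mathbb{C}$, which is strictly larger, so to land precisely on $\mathbb{C}^{\mathbb{N}}$ one should work with the continuous duals in the nuclear Fr\'echet setting appropriate to these smooth algebras (where dualization of the relevant completed complexes is well behaved), or equivalently exhibit an explicit dual basis of continuous functionals indexed by $\mathbb{N}$. By contrast, the periodic cyclic statement is unproblematic precisely because $HP_\bullet$ is finite-dimensional, so no topological subtlety intervenes there.
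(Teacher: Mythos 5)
Your proposal is correct, and its first half coincides with the paper's own argument: the paper likewise obtains $H^\bullet(C(D_q),C(D_q)')$ from $H_\bullet(C(D_q),C(D_q))$ by the universal coefficient theorem (dualizing the chain complex over the field $\mathbb{C}$, citing Loday), so for that part you and the paper are doing the same thing. Where you genuinely diverge is the periodic cyclic statement: you dualize the homological answer $HP_\bullet(C(D_q))\cong\mathbb{C}^4$, whereas the paper works entirely on the cohomology side, running the $I,S,B$ long exact sequence together with the vanishing of Hochschild cohomology in degrees $>1$ to see that $S$ stabilizes, and then exhibiting explicit generating cocycles $[\tau_0]$, $[f_A]$, $[{}_{\sigma_{-1}}\tau_0]$, $[{}_{\sigma_{-1}}f_A]$. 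Your route is more economical, but note two things. First, the duality $HP^\bullet\cong (HP_\bullet)'$ is not automatic: $HP^\bullet$ is a colimit of $HC^{\bullet+2k}$ under $S$ while $HP_\bullet$ involves an inverse limit, and dualization does not commute with inverse limits in general; what rescues the argument here is not merely that the answer is finite-dimensional but that the $S$-tower stabilizes at a finite stage (precisely because the higher Hochschild groups vanish), which is exactly the content of the paper's $I,S,B$ argument -- so your proof implicitly needs that input anyway. Second, the paper's explicit representatives are not idle decoration: $S\tau_0$, $Sf_A$ and their twisted copies are the cocycles paired against $K_0$ in the Chern--Connes index section, so a purely abstract duality proof would leave that later computation without named generators. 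Finally, your caution about the degree $0,1$ case is well placed and in fact sharper than the paper itself: the algebraic dual of $\bigoplus_{\mathbb{N}}\mathbb{C}$ is $\prod_{\mathbb{N}}\mathbb{C}$, so the paper's bare assertion that the cohomology is ``countably infinite in dimension'' has the same gap you identify, and your suggestion (continuous duals in a topological setting, or an explicit $\mathbb{N}$-indexed dual family) is what is needed to make the statement $H^\bullet\cong\mathbb{C}^{\mathbb{N}}$ honest.
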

\begin{proof}
Using the universal coefficient theorem we have relation between the Hochschild homology and the cohomology with dual algebra as the coefficient, $H_\bullet(C(D_q),C(D_q))^{'} = H^\bullet(C(D_q), C(D_q)^{'})$ \cite{L}. Hence $H^\bullet(D_q,D_q^{'}) = 0$ for $\bullet > 1$. And similarly we conclude that for $\bullet  =0, 1$; Hochschild cohomology groups are countably infinite in dimension. For periodic cyclic cohomology we consider the $B,S,I$ long exact sequence for Hochschild and cyclic cohomology and use that fact that higher Hochschild cohomology groups vanish.
\begin{center}
$\cdots \rightarrow HH^1({}_{\sigma_{-1}}\mathcal A )^{\sigma_{-1}} \xrightarrow{B} HC^0({}_{\sigma_{-1}}\mathcal A )^{\sigma_{-1}} \xrightarrow{S} HC^2({}_{\sigma_{-1}}\mathcal A )^{\sigma_{-1}} \xrightarrow{I} HH^2({}_{\sigma_{-1}}\mathcal A )^{\sigma_{-1}} \xrightarrow{B}  HC^1({}_{\sigma_{-1}}\mathcal A )^{\sigma_{-1}} \xrightarrow{S} \cdots$
\end{center} 

Since $HC^{\rho}_{2n}(\mathcal A) = k[1] \oplus k[A]$ for $\rho = \{id, \sigma_{-1}\}$\cite[Prop. 5.2]{H}, and the above spectral sequence stabilizes with all further maps being zero, we conclude that the group $HP^{even}(C(D_q),C(D_q)) \cong \mathbb C^4$ and is generated by $[\tau_0]$, $[f_A]$ and $[{}_{\sigma_{-1}}\tau_0]$ and $[{}_{\sigma_{-1}}f_A]$, where for $\tau_0(1)=1$ and $\tau_0(a) =0$ for all $a \in \mathcal A$, and $f_A$ is the Haar state on $\mathcal A(SU_q(2))$ restricted to the Podle\'s sphere and is given by $f_A(A^{r+1}) = (1-q^4)(1-q^{2r+4})^{-1}$ and for $s>0$ and it vanishes on the PBW basis elements $A^r B^ s$ and $A^r B^{\ast  s}$;  $f_A(A^r B^ s) =0=f_A(A^rB^{\ast s})$\cite{SW}. For $a \in \mathcal A$ the element ${}_{\sigma_{-1}}a \in {}_{\sigma_{-1}}\mathcal A$ and hence ${}_{\sigma_{-1}}\tau_0(1_{{}_{\sigma_{-1}}\mathcal A})=1$ and ${}_{\sigma_{-1}}\tau_0(a)=0$ for $a\in {}_{\sigma{-1}}\mathcal A$. And likewise we defined the Haar measure ${}_{\sigma_{-1}}f_A$ on the $\sigma_{-1}$ copy of $\mathcal A$.
\end{proof}

\section{The quantum real projective space}
\begin{thm}[Hochschild and Cyclic Homology] The Hochschild and cyclic homology groups of $\mathbb RP^2_q$ are as follows:
\begin{center}
$H_\bullet(C(\mathbb RP^2_q),C(\mathbb RP^2_q) ) \cong\begin{cases}
\mathbb C^{\mathbb N} & \text{ for } \bullet  = 0,1\\
0 & \text{ for } \bullet >1. \end{cases}$\\
\end{center}
\begin{center}
$HC_\bullet(C(\mathbb RP^2_q),C(\mathbb RP^2_q) ) \cong\begin{cases}
\mathbb C^2 & \text{ for } \bullet  = 2n\\
0 & \text{ for } \bullet = 2n+1. \end{cases}$\\
\end{center}
\end{thm}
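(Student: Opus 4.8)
The plan is to run exactly the argument used above for $D_q$, but with $\sigma_{-1}$ replaced by $\mu_{-1}$, via the paracyclic decomposition
$$H_\bullet(C(\mathbb{RP}^2_q)) = H_\bullet(\mathcal A,\mathcal A)^{\mu_{-1}}\oplus H_\bullet(\mathcal A, {}_{\mu_{-1}}\mathcal A)^{\mu_{-1}}$$
and its cyclic analogue. The one structural change is that $\mu_{-1}$ now sends $A\mapsto -A$ rather than fixing $A$, so every class represented by an odd power of $A$ becomes $\mu_{-1}$-anti-invariant and is discarded; this is precisely what will cut the disc's answer down.

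First I would dispose of the untwisted sector. From \cite{MNW} and \cite{H} we have $H_0(\mathcal A,\mathcal A) = \mathbb C[1]\oplus\mathbb C[A]\oplus\bigoplus_{m\geq 1}\mathbb C[B^m]\oplus\bigoplus_{m\geq 1}\mathbb C[B^{\ast m}]$ and $H_{\bullet\geq 2}(\mathcal A,\mathcal A) = 0$. Pushing each generator into the bar complex by $f_0$ (resp. $f_1$), applying $\mu_{-1}$, and pulling back by $h_0$ (resp. $h_1$) records the signs $[1]\mapsto[1]$, $[A]\mapsto -[A]$, $[B^m]\mapsto(-1)^m[B^m]$, so that
$$H_0(\mathcal A,\mathcal A)^{\mu_{-1}} = \mathbb C[1]\oplus\bigoplus_{m\geq 1}\mathbb C[B^{2m}]\oplus\bigoplus_{m\geq 1}\mathbb C[B^{\ast 2m}].$$
In degree one the generator $1\otimes e_A$ is killed while $\{B^{2j+1}\otimes e_B\}$ and $\{B^{\ast(2j+1)}\otimes e_{B^{\ast}}\}$ persist. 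Both groups are countably infinite, and degrees $\geq 2$ vanish.

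The twisted sector is where the real work lies. I would feed the bimodule ${}_{\mu_{-1}}\mathcal A$ through the same MNW-to-bar comparison, using $h_\bullet,f_\bullet$ to pin down the $\mu_{-1}$-invariant twisted cycles. Granting that the vanishing $H_{\bullet\geq 2}(\mathcal A,{}_{\mu_{-1}}\mathcal A)=0$ of \cite{H} survives the passage from the $A$-fixing $\sigma_\lambda$ to $\mu_{-1}$, the untwisted vanishing already forces $H_{\bullet>1}(C(\mathbb{RP}^2_q))=0$, and the infinite untwisted $H_0,H_1$ give $H_\bullet\cong\mathbb C^{\mathbb N}$ for $\bullet=0,1$. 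For the cyclic groups I would then invoke the analogue of $HC_{2n}^{\rho}(\mathcal A)=\mathbb C[1]\oplus\mathbb C[A]$, $HC_{2n+1}^{\rho}=0$ in each sector $\rho\in\{id,\mu_{-1}\}$ and test invariance of the even generators exactly as in the disc: because $\mu_{-1}(A)=-A$, the class $[A]$ is anti-invariant in both sectors while $[1]$ is invariant, so each sector contributes a single invariant generator and $HC_{2n}(C(\mathbb{RP}^2_q))\cong\mathbb C^2$, $HC_{2n+1}=0$.

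The main obstacle is the twisted $\mu_{-1}$-computation itself, which is genuinely outside the range of \cite{H}. Since $\mu_{-1}$ reverses $A$, the twisted boundary turns the $A$-commutator into an $A$-anti-commutator, and using the relations $B^{\ast}B+A^2=1=BB^{\ast}+q^4A^2$ one sees that the $A$-type twisted classes collapse very differently than for $\sigma_{-1}$. The delicate point is therefore to verify directly, via the MNW resolution and the homotopies $h_\bullet,f_\bullet$, that the twisted groups $H_\bullet(\mathcal A,{}_{\mu_{-1}}\mathcal A)$ and their cyclic counterparts still vanish above degree one and still leave a single surviving even generator after taking $\mu_{-1}$-invariants. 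This is the step that carries all the quantum content: classically the antipodal $\mathbb Z_2$-action on $S^2$ is free, so the twisted sector would be empty, whereas here it must account for the second copy of $\mathbb C$ in $HC_{2n}$.
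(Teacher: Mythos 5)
Your overall strategy is the same as the paper's: the paracyclic splitting into the untwisted sector $H_\bullet(\mathcal A,\mathcal A)^{\mu_{-1}}$ and the twisted sector $H_\bullet(\mathcal A,{}_{\mu_{-1}}\mathcal A)^{\mu_{-1}}$, with invariants traced through the MNW--bar homotopies $f_\bullet,h_\bullet$. Your untwisted computation agrees with the paper's and yields $\mathbb C^{\mathbb N}$ in degrees $0,1$ and $0$ above (your sign bookkeeping even kills $1\otimes e_A$ in degree one, which is the consistent consequence of $\mu_{-1}(A)=-A$, whereas the paper retains that generator; this does not change the isomorphism type). So the Hochschild half of your argument is essentially the paper's.

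The genuine gap is exactly the step you flag and postpone: the twisted $\mu_{-1}$-sector, which you propose to handle by assuming it mirrors the $\sigma_{-1}$ case, i.e. $HC^{\mu_{-1}}_{2n}(\mathcal A)=\mathbb C[1]\oplus\mathbb C[A]$ with the invariant class $[{}_{\mu_{-1}}1]$ surviving and supplying the second copy of $\mathbb C$ in $HC_{2n}$. That assumption is false: the twisted sector for $\mu_{-1}$ is annihilated by the sphere relations. In $H_0(\mathcal A,{}_{\mu_{-1}}\mathcal A)=\mathcal A/\mathrm{span}\{ma-\mu_{-1}(a)m\}$ one has $A^2\equiv 0$ (take $m=a=A$) and $B^{\ast}B+BB^{\ast}\equiv 0$ (take $m=B^{\ast}$, $a=B$), while the $s=1$ relations give $B^{\ast}B+BB^{\ast}=2-(1+q^4)A^2$; hence $[1]=0$, and an identical argument shows every PBW monomial is a twisted boundary, so the twisted $H_0$ vanishes and with it the twisted cyclic theory. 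This is precisely the quantum shadow of your own (correct) closing remark that the classical antipodal action is free; you should have trusted that observation rather than postulating a nonempty twisted sector to ``account for'' the second $\mathbb C$. The paper takes the opposite route: it lets the twisted cyclic groups vanish in positive degrees (citing \cite{H}) and produces both copies of $\mathbb C$ from the untwisted sector, asserting that \emph{both} cycles of $HC_{2n}(\mathcal A,\mathcal A)$, i.e. $S^n[1]$ and $S^n[A]$, are $\mu_{-1}$-invariant. Note the tension: your own sign rule $[A]\mapsto-[A]$ --- the one you used to cut down $H_0$ and $H_1$ --- contradicts that assertion, and combined with the vanishing twisted sector it would give $HC_{2n}\cong\mathbb C$ (which is what the paper's subsequent Corollary for $HP^{even}(C(\mathbb RP^2_q))$ in fact states). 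So as written your proposal does not establish the claimed $\mathbb C^2$: the verification you defer comes out empty, and by your own reasoning the untwisted sector contributes only one invariant class per even degree.
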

\begin{proof}
Similar to the quantum disc case, we have the following:
$$H_\bullet(C(\mathbb RP^2_q)) =H_\bullet(\mathcal A,\mathcal A)^{\mu_{-1}} \oplus H_\bullet(\mathcal A, {}_{\mu_{-1}}\mathcal A)^{\mu_{-1}}. $$
From \cite{H} we know that $H_0(\mathcal A,\mathcal A) = \mathbb C[1] \oplus \mathbb C[A] \oplus \Sigma_{m \geq 1}^{\oplus}\mathbb C[B^m]\oplus \Sigma_{m \geq 1}^{\oplus}\mathbb C[B^{\ast m}]$. We observe that:
$$H_0(\mathcal A,\mathcal A)^{\mu_{-1}} = \mathbb C[1] \oplus \Sigma_{m \geq 1}^{\oplus}\mathbb C[B^{2m}]\oplus \Sigma_{m \geq 1}^{\oplus}\mathbb C[B^{\ast 2m}].$$
Similarly we also have
$$H_0(\mathcal A,{}_{\mu_{-1}}\mathcal A)^{\mu_{-1}} = \mathbb C[1].$$
Hence we have $H_0(D_q,D_q ) \cong \mathbb C^{\mathbb N}$. For $\rho = \mu_{-1}$, $H_1(\mathcal A, {}_{\mu_{-1}}\mathcal A) = 0$ and hence $H_1(C(\mathbb RP^2_q),C(\mathbb RP^2_q) )$ is countably infinite dimensional generated by the elements $1 \otimes e_A$, $\{B^{2j+1} \otimes e_B\}_{j\geq0}$ and $\{B^{\ast {2j+1}} \otimes e_{B^{\ast}}\}_{j\geq0}$. Higher Hochschild homology groups $HH^{\rho}_{\bullet}({}_{\mu_{-1}}\mathcal A)$ for $( \bullet >1)$ vanishes\cite{H} and hence using the paracyclic decomposition for $H_\bullet(C(\mathbb RP^2_q),C(\mathbb RP^2_q) ) $ we conclude that:
$$H_\bullet(C(\mathbb RP^2_q),C(\mathbb RP^2_q) ) = 0 \text{ for } \bullet>1.$$

To compute $HC_\bullet(C(\mathbb RP^2_q),C(\mathbb RP^2_q) )$. We observe that the cyclic homology group  $HC^{\mu_{-1}}_{n}(\mathcal A) = 0$ for  all $n > 0$\cite{H}. Using the paracyclic decomposition for the cyclic homology $HC_{\bullet}(\mathbb RP^2_q)$ we have:
$$HC_\bullet(C(\mathbb RP^2_q)) =HC_\bullet(\mathcal A,\mathcal A)^{\mu_{-1}} \oplus HC_\bullet(\mathcal A, {}_{\mu_{-1}}\mathcal A)^{\mu_{-1}}. $$
We now check that both the cycles of $HC_{2n}(\mathcal A,\mathcal A)$ are $\mu_{-1}$ invariant. Hence the result.
\end{proof}
\begin{cor}[Hochschild and Cyclic Cohomology] The Hochschild and cyclic homology groups of $\mathbb RP^2_q$ are as follows:
\begin{center}
$H^\bullet(C(\mathbb RP^2_q),C(\mathbb RP^2_q) ) \cong\begin{cases}
\mathbb C^{\mathbb N} & \text{ for } \bullet  = 0,1\\
0 & \text{ for } \bullet >1. \end{cases}$\\
\end{center}
\begin{center}
$HP^\bullet(C(\mathbb RP^2_q)) \cong\begin{cases}
\mathbb C & \text{ for } \bullet  = 2n\\
0 & \text{ for } \bullet = 2n+1. \end{cases}$\\
\end{center}
\end{cor}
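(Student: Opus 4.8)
The plan is to follow the template of the quantum disc corollary, treating the Hochschild cohomology and the periodic cyclic cohomology separately. First I would obtain the Hochschild cohomology by duality from the homology groups of the preceding theorem, and then compute the periodic cyclic cohomology by running Connes' $B,S,I$ long exact sequence against the vanishing of the higher Hochschild cohomology and isolating the $\mu_{-1}$-invariant stable cocycles.

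For the Hochschild cohomology I would invoke the universal coefficient theorem \cite{L}, which yields $H^\bullet(C(\mathbb RP^2_q),C(\mathbb RP^2_q)')\cong H_\bullet(C(\mathbb RP^2_q),C(\mathbb RP^2_q))'$. Dualising the Hochschild homology groups computed above then gives $\mathbb C^{\mathbb N}$ in degrees $\bullet=0,1$ and $0$ in degrees $\bullet>1$, with no further argument required.

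For the periodic cyclic cohomology I would begin from the paracyclic decomposition
$$HP^\bullet(C(\mathbb RP^2_q)) = HP^\bullet(\mathcal A,\mathcal A)^{\mu_{-1}} \oplus HP^\bullet(\mathcal A,{}_{\mu_{-1}}\mathcal A)^{\mu_{-1}},$$
and examine each summand through the $B,S,I$ sequence relating Hochschild and cyclic cohomology. Since the higher Hochschild cohomology vanishes, the periodicity operator $S$ is eventually an isomorphism, so each cyclic tower stabilises and its colimit computes the periodic group. The twisted summand then drops out: as $HC^{\mu_{-1}}_n(\mathcal A)=0$ for all $n>0$ \cite{H}, its stable value is $0$. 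The untwisted summand is the $\mu_{-1}$-invariant part of the stable cyclic cohomology of the Podle\'s sphere, a two-dimensional space spanned by $[\tau_0]$ and the Haar class $[f_A]$. I would then argue that $[\tau_0]$ is $\mu_{-1}$-invariant while $[f_A]$ is anti-invariant, leaving exactly $\mathbb C$ in even degrees (generated by $[\tau_0]$) and $0$ in odd degrees.

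The hard part will be pinning down the sign of $\mu_{-1}$ on the stable class $[f_A]$. Following the Strategy section, I would push $\tau_0$ and $f_A$ into the bar complex through $f_\bullet$, act by $\mu_{-1}$, and pull the result back through $h_\bullet$; since $\mu_{-1}(A)=-A$ and $[f_A]$ is the class detecting $A$, the expectation is that it returns as $-[f_A]$, whereas $[\tau_0]$ returns unchanged. Establishing precisely this $-1$ is the crux, and it is exactly what collapses the answer from the $\mathbb C^4$ of $D_q$ --- where $\sigma_{-1}$ fixes $A$, so that $[f_A]$ and $[\tau_0]$, together with their twisted copies, all survive --- down to $\mathbb C$ here. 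Once the sign is settled, the vanishing of the odd groups and of the twisted summand follows immediately.
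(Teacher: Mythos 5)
Your proposal is correct and follows essentially the same route as the paper's own proof: Hochschild cohomology via the universal coefficient duality with the preceding homology theorem, the twisted summand eliminated because $HC^{\mu_{-1}}_{n}(\mathcal A)=0$ for $n>0$, and the untwisted even periodic part cut down to the one-dimensional $\mu_{-1}$-invariant subspace spanned by $[\tau_0]$ (the paper states this tersely as ``only the subspace spanned by $[1]$ is invariant''). One small precision: since the Haar state satisfies $f_A(1)=1$, the class $[f_A]$ is not literally an anti-invariant eigenvector (rather $[f_A]-[\tau_0]$ is, as $\mu_{-1}\cdot[f_A]=2[\tau_0]-[f_A]$ on the level of pairings with $[1]$ and $[A]$), but this does not change the dimension count or the conclusion.
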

\begin{proof}
Through the reasoning as before we conclude that for $\bullet  =0, 1$; Hochschild cohomology groups are countably infinite in dimension and vanishes for $\bullet >1$. For periodic cyclic cohomology we have that the group $HP^{even}(C(\mathbb RP^2_q),C(\mathbb RP^2_q)) \cong \mathbb C$ and is generated by $\mathbb C[1]$. This is so because $H_\bullet(\mathcal A, {}_{\tau_{-1}}) = 0$ for $\bullet>0$ and Among the two cocycles of $HP^{even}(\mathcal A)$, only the one dimensional subspace spanned by $[1]$ is $\tau_{-1}$ invariant. Hence the result.
\end{proof}

\section{Chern-Connes Indices}
Chern-Connes indices are useful invariants in noncommutative geometry. Explicitly, for a C*-algebra $\mathcal B$ over $\mathbb C$ we have the following map\cite[Section 8]{L}:
$$ch_{0, {n}} : K_0(\mathcal B) \rightarrow HC_{2n}(\mathcal B).$$
defined by $$[e] \mapsto tr(c(e)).$$ 
where $K_0(\mathcal B)$ is the Grothendieck group of the ring $\mathcal B$. We have the following pairing:
$$K_0(\mathcal B) \times HC^{2n}(\mathcal B) \xrightarrow[]{ch \times id} HC_{2n}(\mathcal B) \times HC^{2n}(\mathcal B) \rightarrow \mathbb C.$$ 
In this section we calculate the above pairing for the quantum disc $D_q$ and the quantum real projective space $\mathbb RP^2_q$. The vanishing of the second Hochschild homology leaves the two orbifolds with fewer periodic cocycles than expected. While the algebra $C(D_q)$ is a Toeplitz algebra and hence $K_0(C(D_q)) \cong \mathbb Z$ and is generated by $[1_{D_q}]$. Since  $HC^{2n}(D_q) \cong \mathbb C^4$. Hence we have the following Chern-Connes index table for $D_q$.
 \begin{center}
 
  \begin{tabular}{c || c | c | c | c || }
    
     &   $S\tau_0$ & $Sf_A$ & $S{}_{\sigma_{-1}}\tau_0$ & $S{}_{\sigma_{-1}}f_A$ \\ \hline \hline
    $[1_{D_q}]$ & $1$ & $0$ & $0$ &$0$\\ \hline \hline
  
  \end{tabular}
\end{center}
Similarly we have a description of the group $K_0(\mathbb RP^2_q) \cong \mathbb Z \oplus \mathbb Z_2$ \cite{HMS}generated by $[1]$ and $[P]$. Hence we have the following Chern-Connes index table for $\mathbb RP^2_q$:
\begin{center}
 
  \begin{tabular}{c || c | c || }
    
     &   $S\tau_0$    \\ \hline \hline
    $[1_{\mathbb RP^2_q}]$ & $1$ \\ \hline \hline
    $[P]$ & $0$ \\ \hline \hline
  \end{tabular}.
\end{center}

\section{Conclusion}
We see that the quantum parameter $q$ does not appear in the Chern-Connes indices of both the $\mathbb Z_2$ orbifolds of the Podle\'s quantum spheres. This can be attributed to the vanishing of the second homology of the Podle\'s spheres. We see that similar computations for the Podle\'s spheres yielded more invariants \cite{W}, but the vanishing of several projection of the the Podle\'s spheres after $\mathbb Z_2$ action leaves few projections on the quotient space. 

\textbf{Acknowledgment}:
 I acknowledge the discussion with Dr. Ulrich Kr\"ahmer and Prof. Xiang Tang and their valuable comments and references for the same.


\end{document}